\documentclass[11pt,a4paper,reqno]{amsart}
\usepackage{float, verbatim}
\usepackage{amsmath}
\usepackage{amssymb}
\usepackage{amsthm}
\usepackage{amsfonts}
\usepackage{mathtools}
\numberwithin{equation}{section}
\usepackage[colorlinks = true, citecolor = black, pagebackref=false]{hyperref}
\usepackage{color}
\usepackage{xcolor}
\usepackage{enumerate}
\usepackage{cite}

\newcommand{\Pp}{\mathbb{P}}
\newcommand{\N}{\mathbb{N}}
\newcommand{\supp}{\operatorname{supp}}

\newtheorem*{thm*}{Theorem}

\newtheorem{thm}{Theorem}[section]

\newtheorem{lem}[thm]{Lemma}
\newtheorem{coro}[thm]{Corollary}
\newtheorem{defi}[thm]{Definition}

\newtheorem{prop}[thm]{Proposition}
\newtheorem{conj}[thm]{Conjecture}
\newtheorem{rem}[thm]{Remark}
\newtheorem{ques}[thm]{Question}

\newcommand{\R}{\mathbb{R}}
\newcommand{\Z}{\mathbb{Z}}

\usepackage{todonotes}

\makeatletter
\@namedef{subjclassname@2020}{\textup{2020} Mathematics Subject Classification}
\makeatother

\begin{document}

\title[]{A conditional arithmetic obstruction to the prime numbers
as a spectrum of a probability measure}

\author{Zhi-Yi Wu}
\address{Z.-Y. Wu, School of Mathematics and Information Science, Guangzhou University, Guangzhou, 510006, P.~R.~China}
\curraddr{}
\email{zhiyi\_wu2021@163.com}
%\thanks{*Corresponding author.}
\author{Qian Zhao}
\address{Q. Zhao, School of Mathematics and Information Science, Guangzhou University, Guangzhou, 510006, P.~R.~China}
\curraddr{}
\email{qian\_zhao2026@163.com}
\thanks{ }

\subjclass[2020]{28A80,11N05,42C30}

\keywords{spectrum; prime numbers; Polignac conjecture}

\date{}

\dedicatory{}

\begin{abstract}
Let $\Pp=\{2,3,5,7,\ldots\}$ denote the set of prime numbers.
We prove that if every sufficiently large positive even integer can be
represented as a difference of two primes, then there is no Borel
probability measure $\mu$ on $\R$ for which \(\left\{e^{2\pi i p x}:p\in\Pp\right\}\)
is an orthonormal basis of $L^2(\mu)$. In particular, under the Polignac conjecture, the prime numbers $\Pp$ cannot be a spectrum (i.e., the set of frequencies of an exponential orthonormal basis) of any probability measure on $\R$.
\end{abstract}

\maketitle
\allowdisplaybreaks
\section{Introduction}
A Borel probability measure $\mu$ with compact support on $\R^d$ is called a
\emph{spectral measure} if there exists a countable set
$\Lambda\subseteq\R^d$ such that
\(E(\Lambda):=\left\{
        e_\lambda(x)=e^{2\pi i\lambda x}:
        \lambda\in\Lambda\right\}\)
is an orthonormal basis for $L^2(\mu)$. In this case, $\Lambda$ is called
a {\it spectrum} of $\mu$.

The study of spectral measures originates from the classical fact that the
Lebesgue measure on $[0,1]$ is spectral with spectrum $\Z$ and was
initiated by Fuglede's conjecture \cite{Fu}, which asserted that the normalized
Lebesgue measure on a set $\Omega$ of positive finite Lebesgue measure
is a spectral measure if and only if $\Omega$ tiles $\R^d$ by
translations.  Although the conjecture turned out to be false in
dimensions $d\ge 2$ (see \cite{Z26} and the references therein), the theory of spectral measures has continued to
flourish in many unexpected directions.  A landmark was the discovery
by Jorgensen and Pedersen \cite{JoPe} that certain singular, self-similar Cantor
measures admit spectra.  Since then, the study of the spectrality of
singular measures has become an active research field \cite{LW02,Dai12,DHLai19}, driving the
construction of new spectral measures and raising the general question:
given a singular continuous measure, what are its possible spectra?  Conversely, given a
discrete set, when can it be realized as a spectrum?

A fundamental structural result for spectral measures establishes a trichotomy on the lower Beurling density of spectra, proven by He, Lai and Lau \cite{HeLaLa}. To state this result, we first recall the definition of lower Beurling density, which describes the minimal asymptotic point count of a discrete set inside any large translated interval.

\begin{defi}\label{def:density}
\rm For a discrete set $\Lambda\subseteq\mathbb R$, the {\it lower Beurling density}
is defined as
\[
D^{-}(\Lambda)=\liminf_{h\to\infty}\inf_{x\in\mathbb R}
\frac{\#\bigl(\Lambda\cap[x-h,x+h]\bigr)}{h},
\]
where $\#$ denotes the cardinality of a set.
\end{defi}

\begin{prop}[\cite{HeLaLa}]\label{prop:density}
Let \(\mu\) be a compactly supported probability measure on
\(\mathbb{R}^d\), and let \(\Lambda\) be a spectrum of \(\mu\).
\begin{enumerate}[(i)]
    \item  If $\mu = \sum_{c \in \mathcal{C}} p_c \delta_c$ is discrete, then $\#\Lambda < \infty$ and $\#\mathcal{C} < \infty$.
    \item If \(\mu\) is singular continuous, then \(D^-(\Lambda) = 0\).
    \item If \(\mu\) is absolutely continuous, then \(D^-(\Lambda) > 0\).
\end{enumerate}
\end{prop}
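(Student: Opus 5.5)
The plan is to reduce everything to the standard characterization of spectra via the function
\[
Q(\xi)=\sum_{\lambda\in\Lambda}|\widehat{\mu}(\xi+\lambda)|^2 .
\]
By Parseval, $E(\Lambda)$ is an orthonormal basis of $L^2(\mu)$ if and only if $Q\equiv 1$ on $\R^d$. More generally, completeness gives, for every $g\in L^2(\mu)$ and every $\xi$,
\[
\sum_{\lambda}|\widehat{g\mu}(\xi+\lambda)|^2=\|g\|^2_{L^2(\mu)} .
\]
I will write the argument for $d=1$, using intervals. In $\R^d$ the same steps go through with cubes in place of intervals.

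A preliminary observation is that $\Lambda$ has bounded local count. Since $\widehat\mu(0)=1$ and $\widehat\mu$ is uniformly continuous, there is $\delta>0$ with $|\widehat\mu|\ge 1/2$ on $[-\delta,\delta]$. Then $Q\equiv1$ forces $\#(\Lambda\cap[x,x+\delta])\le 4$ for every $x$. Hence every interval of length $\ell\ge 1$ contains at most $C\ell$ points of $\Lambda$, with $C$ independent of the position of the interval.

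\emph{Part (i).} Let $c\in\mathcal C$. Apply Parseval to $g=\mathbf 1_{\{c\}}$. This gives $\langle g,e_\lambda\rangle=p_c e^{-2\pi i\lambda c}$, so
\[
p_c=\|g\|^2=\sum_{\lambda}p_c^2 ,
\]
and therefore $\#\Lambda=1/p_c<\infty$. Since $L^2(\mu)$ then has dimension $\#\Lambda$, and the indicators of atoms are linearly independent, we also get $\#\mathcal C\le\#\Lambda<\infty$.

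\emph{Part (ii).} Suppose $D^-(\Lambda)>0$. Then there are $c>0$ and $R_0$ such that every interval of length $R\ge R_0$ contains at least $cR$ points of $\Lambda$. Integrate the identity $Q\equiv1$ over $\xi\in[-R,R]$:
\[
2R=\sum_\lambda\int_{\lambda-R}^{\lambda+R}|\widehat\mu(\eta)|^2\,d\eta .
\]
For each $\eta\in[-R/2,R/2]$, at least $cR$ of these intervals contain $\eta$. Hence
\[
cR\int_{-R/2}^{R/2}|\widehat\mu|^2\le 2R .
\]
Letting $R\to\infty$ shows $\widehat\mu\in L^2(\R)$. Then $\mu$ has an $L^2$ density, so it is absolutely continuous, contradicting singular continuity. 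Hence $D^-(\Lambda)=0$.

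\emph{Part (iii).} This is the step I expect to be the main obstacle, because the density $f$ of $\mu$ need only lie in $L^1$, so $\widehat\mu$ need not be square integrable. I would fix this by localizing.
\begin{enumerate}[(a)]
\item Choose $M$ so that $A=\{f\le M\}$ has $\mu(A)>0$. Set $\nu=\mathbf 1_A\mu$. Its density is bounded and integrable, hence in $L^2$, so $\widehat\nu\in L^2$.
\item Apply the completeness identity to $g=\mathbf 1_A$. This gives $\sum_\lambda|\widehat\nu(\xi+\lambda)|^2=\mu(A)$ for all $\xi$.
\item Suppose $D^-(\Lambda)=0$. Then for arbitrarily large $h$ there is an interval $[x-h,x+h]$ containing no point of $\Lambda$. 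Average the identity in (b) over $\xi\in[-x,-x+1]$. Every shifted point $\xi+\lambda$ then lies outside $[-(h-1),h-1]$. By the bounded local count, each $\eta$ is covered by at most $C$ of the unit windows $[\lambda-x,\lambda-x+1]$. This gives
\[
\mu(A)\le C\int_{|\eta|\ge h-1}|\widehat\nu(\eta)|^2\,d\eta .
\]
\item The right-hand side tends to $0$ as $h\to\infty$, because $\widehat\nu\in L^2$. This contradicts $\mu(A)>0$, so $D^-(\Lambda)>0$.
\end{enumerate}
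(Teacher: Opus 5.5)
Your proof is correct. It differs from the paper mainly in that the paper gives no proof at all: Proposition~\ref{prop:density} is simply quoted from \cite{HeLaLa}.

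The natural comparison is the paper's proof of Proposition~\ref{prop:singular}. That proof faces the same two issues for a possibly mixed, not necessarily compactly supported $\mu$.
\begin{itemize}
\item In the atomic case, the paper's argument is exactly your Parseval computation with $\mathbf 1_{\{c\}}$ in (i).
\item For the absolutely continuous piece, the paper restricts to a set $A$ on which the density lies between $\alpha$ and $\beta$. It deduces that $E(\Pp)$ is a Fourier frame for $L^2(A)$ and then invokes Landau's density theorem.
\end{itemize}

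Your route in (iii) is more elementary. It uses only three ingredients:
\begin{itemize}
\item an upper bound on the density, which gives $\widehat\nu\in L^2$;
\item the uniform local finiteness of $\Lambda$, forced by $|\widehat\mu|\ge 1/2$ near $0$;
\item the decay of the $L^2$-tail of $\widehat\nu$ across arbitrarily long gaps of $\Lambda$.
\end{itemize}
You never use compact support, and $A$ can just as well be chosen off a Lebesgue-null carrier of the singular part. So the same argument would also prove the absolutely continuous half of Proposition~\ref{prop:singular} without Landau's theorem. What Landau buys in exchange is a quantitative lower bound for $D^-(\Lambda)$ in terms of $\mathcal L(A)$, which is not needed here.

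Two small points should be made explicit.
\begin{itemize}
\item In (iii), passing from $D^-(\Lambda)=0$ to arbitrarily long $\Lambda$-free intervals needs one line: if every interval of length $L$ met $\Lambda$, then $D^-(\Lambda)\ge 1/L$.
\item In (ii), you need not restrict to $\eta\in[-R/2,R/2]$. Every $\eta\in\R$ lies in at least $2cR$ of the intervals $[\lambda-R,\lambda+R]$, which gives $\int_{\R}|\widehat\mu|^2\le 1/c$ at once.
\end{itemize}
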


As a consequence of Proposition~\ref{prop:density}, every spectral measure is of pure type, i.e., it is either a finite sum of Dirac masses,
absolutely continuous, or singular continuous with
respect to the Lebesgue measure.  Therefore, each type can be
studied separately, and the most subtle case is the singular
continuous one.

  Given a singular continuous spectral measure $\mu$, every spectrum of $\mu$ has zero lower Beurling density by Proposition \ref{prop:density}(ii). This fact naturally leads to an inverse problem: can every sufficiently sparse discrete set be realized as the spectrum of some singular continuous measure? More precisely,

\begin{ques}\label{ques:inverse}
Given a discrete set \(\Lambda\subseteq\R\) with \(D^-(\Lambda)=0\), does
there exist a (necessarily singular continuous) probability measure with compact support
\(\mu\) for which \(\Lambda\) serves as a spectrum?
\end{ques}

The most prominent example among the Cantor-type spectral measures discovered by Jorgensen and Pedersen is the standard 4-th Cantor measure $\mu_4$\cite{JoPe}. It is a
singular continuous spectral measure, with a canonical spectrum given by
\begin{equation*}%\label{eq:Lambda4}
    \Lambda_4
    =\Biggl\{\sum_{k=0}^{n} a_k 4^k : a_k\in\{0,1\},\ n\in\N\Biggr\}.
\end{equation*}
Since $\mu_4$ is singular continuous, Proposition~\ref{prop:density}
implies $D^{-}(\Lambda_4)=0$, which can also be verified directly from
the lacunary structure of $\Lambda_4$.  Subsequently, many spectra  were obtained for some self-similar,
self-affine and Moran spectral measures \cite{AH14,Dai16,DHLai13,DHS09,DJ07a}; in all
these cases the underlying measures are singular continuous, so their
spectra likewise have zero lower Beurling density.  A common feature
of these constructions is that the spectra are highly lacunary and tree-like sets with extremely sparse additive structure.

Despite the numerous explicit examples outlined above, Question~\ref{ques:inverse} remains
largely open for sets that are not generated by such iterative constructions.
The primes naturally come to mind as a primary candidate.
Beyond their intrinsic importance within number theory, the set of prime numbers
\[
    \Pp = \{2,3,5,7,\ldots\},
\]
has zero lower Beurling density (see Lemma \ref{lem:Pdensity}).
It is therefore a particularly natural test case for Question \ref{ques:inverse}.
Our main result establishes that, under a mild arithmetic hypothesis concerning prime gaps,
the answer is negative.

We first establish that any Borel probability measure with spectrum $\Pp$ must indeed be singular continuous.
\begin{prop}\label{prop:singular}
If $\Pp$ is the spectrum of a Borel probability measure $\mu$, then $\mu$ must be singular continuous.
\end{prop}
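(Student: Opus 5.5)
The plan is to rule out the discrete and absolutely continuous components of $\mu$ separately. Since the statement is for an arbitrary Borel probability measure (not assumed compactly supported), I will not cite Proposition~\ref{prop:density} directly. Instead I will reproduce its two relevant mechanisms in a form that needs no compactness: a Bessel-inequality argument against atoms, and a Landau-type density argument against an absolutely continuous part. Write the Lebesgue decomposition $\mu=\mu_d+\mu_{ac}+\mu_{sc}$. The goal is to show $\mu_d=0$ and $\mu_{ac}=0$.

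\emph{No atoms.} Suppose $\mu(\{c\})=w>0$ for some $c\in\R$, and take $f=\mathbf 1_{\{c\}}\in L^2(\mu)$. Then $\|f\|_{L^2(\mu)}^2=w$, while for every $p\in\Pp$,
\[
|\langle f,e_p\rangle_\mu|^2=\bigl|w\,e^{-2\pi i p c}\bigr|^2=w^2 .
\]
Since $\Pp$ is infinite, Parseval's identity $\|f\|^2=\sum_{p\in\Pp}|\langle f,e_p\rangle_\mu|^2$ would give $w=\infty$, which is absurd. Hence $\mu_d=0$. This step is routine.

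\emph{No absolutely continuous part.} Suppose $\mu_{ac}=g\,dx$ with $g\not\equiv0$.
\begin{enumerate}[(i)]
\item Choose a bounded set $E$ of positive Lebesgue measure and constants $0<a\le g\le M$ on $E$.
\item Since $\mu_{sc}$ is concentrated on a Lebesgue-null set $N$, replace $E$ by $E\setminus N$, so that $\mu|_E=g\,dx|_E$.
\item For $h\in L^2(E,dx)$, set $f=h/g$ on $E$ and $f=0$ elsewhere. Then $f\in L^2(\mu)$ with $\|f\|_\mu^2=\int_E|h|^2/g\,dx$, and $\langle f,e_p\rangle_\mu=\widehat h(p)$.
\item Parseval in $L^2(\mu)$ then gives
\[
\frac1M\|h\|_{L^2(E)}^2\le\sum_{p\in\Pp}|\widehat h(p)|^2\le\frac1a\|h\|_{L^2(E)}^2
\qquad\text{for all } h\in L^2(E).
\]
\end{enumerate}
Thus $\Pp$ is a set of stable sampling for the Paley--Wiener space $PW_E=\{\widehat h: h\in L^2(E)\}$ with $E$ bounded and $|E|>0$. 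Landau's necessary density condition then forces $D^-(\Pp)\ge|E|>0$. This contradicts Lemma~\ref{lem:Pdensity}, which gives $D^-(\Pp)=0$. Hence $\mu_{ac}=0$.

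Combining the two steps, $\mu=\mu_{sc}$, so $\mu$ is singular continuous. Since $\mu$ is a probability measure, it is nonzero.

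The main obstacle is the absolutely continuous step. One must be careful that the test functions live only on $E$, which is handled by removing the null set carrying $\mu_{sc}$. One must also make sure the cited form of Landau's theorem applies to bounded spectra $E$ that are not intervals. If one prefers to avoid Landau, the alternative is to mimic the proof of Proposition~\ref{prop:density}(iii) in \cite{HeLaLa}: test the frame inequality on $h=\mathbf 1_{I}$ for a small interval $I\subseteq E$ of near-full density, modulated by $e^{2\pi i tx}$, and average over $t\in[x-R,x+R]$. The left side is then bounded below by a constant times $R$, while the right side is controlled by $\#(\Pp\cap[x-2R,x+2R])$ plus tails. Choosing the $x$ that realize $D^-(\Pp)=0$ yields the contradiction.
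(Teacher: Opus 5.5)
Your proposal is correct and follows the paper's proof essentially step for step. Atoms are ruled out by applying Parseval to $\mathbf{1}_{\{c\}}$. For the absolutely continuous part, you restrict to a bounded set of positive Lebesgue measure, off the null set carrying $\mu_{sc}$, on which the density is pinched between two positive constants. There Parseval becomes a Fourier frame inequality for $L^2(E)$, and Landau's density theorem then contradicts $D^-(\Pp)=0$, exactly as the paper does (the paper also applies Landau to a general bounded measurable set, so your He--Lai--Lau-style fallback is not needed).
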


Then the lower Beurling density restrictions mentioned above are automatically
satisfied and the obstacle, if any, must be of an arithmetic nature. Our main result requires only the following weak hypothesis on the set of prime differences.

\medskip

\noindent
\textbf{Eventual even difference hypothesis.}
There exists $H\in\mathbb{N}$ such that
\begin{equation}\tag{ED}\label{eq:ED}
    2h\in \Pp-\Pp
    \qquad\text{for all } h\geq H.
\end{equation}
Here
\[
    \Pp-\Pp
    :=
    \{p-q : p,q\in\Pp\}
\]
denotes the prime difference set. Our main result is the following.

\begin{thm}\label{thm:main}
Suppose the eventual even difference hypothesis \textup{(ED)} holds. Then
there is no Borel probability measure $\mu$ on $\R$ such that
$E(\Pp)$ is an orthonormal basis for $L^2(\mu)$.
\end{thm}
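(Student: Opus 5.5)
The plan is to turn the orthogonality relations into vanishing of $\hat\mu$ on an arithmetic progression. We then push $\mu$ forward to the circle so that the pushed-forward measure has only finitely many nonzero Fourier coefficients. That forces it to be absolutely continuous, which contradicts Proposition~\ref{prop:singular}.

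\textbf{Step 1 (zeros of $\hat\mu$).} Suppose $E(\Pp)$ is an orthonormal basis of $L^2(\mu)$, and write $\hat\mu(\xi)=\int e^{-2\pi i\xi x}\,d\mu(x)$. For distinct primes $p,q$, orthogonality gives
\[
0=\langle e_p,e_q\rangle_{L^2(\mu)}=\int e^{2\pi i(p-q)x}\,d\mu(x)=\hat\mu(q-p).
\]
Hence $\hat\mu$ vanishes on $(\Pp-\Pp)\setminus\{0\}$. By \eqref{eq:ED}, and since $\hat\mu(-\xi)=\overline{\hat\mu(\xi)}$, this gives $\hat\mu(2h)=0$ for all $h\in\Z$ with $|h|\ge H$. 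No compactness of $\operatorname{supp}\mu$ is needed, since $\hat\mu$ is defined for any finite Borel measure.

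\textbf{Step 2 (push forward to the torus).} Let $T:\R\to\mathbb{T}=\R/\Z$ be $T(x)=2x \bmod 1$, and let $\nu=\mu\circ T^{-1}$, a Borel probability measure on $\mathbb{T}$. For $h\in\Z$,
\[
\hat\nu(h)=\int_{\mathbb{T}} e^{-2\pi i h y}\,d\nu(y)=\int_\R e^{-2\pi i h\cdot 2x}\,d\mu(x)=\hat\mu(2h).
\]
By Step 1, $\hat\nu(h)=0$ for $|h|\ge H$. Set $f(y)=\sum_{|h|<H}\hat\mu(2h)e^{2\pi i h y}$. Then $\nu$ and the measure $f\,dy$ have identical Fourier coefficients. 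By uniqueness of Fourier–Stieltjes coefficients on $\mathbb{T}$, $\nu=f\,dy$, so $\nu$ is absolutely continuous (and $f\ge0$, $\int f=1$).

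\textbf{Step 3 (contradiction with singularity).} By Proposition~\ref{prop:singular}, $\mu$ is singular continuous, so there is a Lebesgue-null Borel set $N\subseteq\R$ with $\mu(N)=1$. We may shrink $N$ to a $\sigma$-compact null set, so that $T(N)$ is Borel. Since $N=\bigcup_{k\in\Z}\bigl(N\cap[k,k+1)\bigr)$ and $T$ is affine with slope $2$ on each half-unit interval, $T(N)$ is a countable union of null sets, hence Lebesgue-null in $\mathbb{T}$. But $\nu(T(N))\ge\mu(N)=1$. This contradicts absolute continuity of $\nu$.

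\textbf{Main obstacle.} The only step carrying real content is the observation in Step 2 that vanishing of $\hat\mu$ on all large even integers makes the dilated, periodized measure a trigonometric polynomial. The substantive input is Proposition~\ref{prop:singular}, which rules out the absolutely continuous case, where such a $\nu$ is genuinely possible. The remaining work is routine bookkeeping: measurability of $T(N)$, the sign convention for $\hat\mu$, and uniqueness of Fourier–Stieltjes coefficients on $\mathbb{T}$.
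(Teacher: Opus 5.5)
Your proof is correct, but it takes a different route from the paper's. The paper never invokes Proposition~\ref{prop:singular} in proving Theorem~\ref{thm:main}. It sets $a(k)=|\widehat{\mu}(k)|^2$ and uses orthogonality to get $a(2h)=0$ for $|h|\ge H$. It then applies Parseval to $e_n$ at very negative odd $n$ to force $a(n-2)=1$, hence $a(k)=1$ for all large odd $k$, so that $\sum_{p}a(p)=1$ (Parseval at $n=0$) diverges. That argument is elementary and self-contained: the core is the purely combinatorial Theorem~\ref{thm:obstruction} about nonnegative even functions on $\Z$, and it does not depend on Landau's density theorem.

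Your Steps~1--2 use only orthogonality. From it you extract a structural fact: the push-forward $\nu$ of $\mu$ under $x\mapsto 2x \bmod 1$ has a trigonometric-polynomial density. A nonzero singular part of $\mu$, atoms included, would push forward to a nonzero singular part of $\nu$. So your argument shows that under \textup{(ED)} any $\mu$ for which $E(\Pp)$ is merely an orthonormal \emph{set} must be absolutely continuous. Completeness then enters only through the absolutely continuous half of Proposition~\ref{prop:singular}, that is, through Landau's theorem together with $D^-(\Pp)=0$.

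So you trade the paper's elementary counting for reliance on a deep external result. In return you get a clean split of the obstruction: orthogonality plus \textup{(ED)} forces absolute continuity, while completeness plus zero density forbids it. The measure-theoretic details all check out:
\begin{itemize}
\item inner regularity gives a $\sigma$-compact null set with $T(K)$ Borel;
\item $T$ is locally Lipschitz, so it maps null sets to null sets;
\item uniqueness of Fourier--Stieltjes coefficients, applied to $\nu-f\,dy$, gives $\nu=f\,dy$.
\end{itemize}
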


The condition \textup{(ED)} is a weak hypothesis that only requires
every sufficiently large even integer to be represented as a difference
of two primes at least once.  A much stronger statement in additive number theory is the celebrated Polignac conjecture, originally posed by Alphonse de Polignac in 1849 \cite{Po}.

\begin{conj}[Polignac conjecture]\label{conj:polignac}
For every positive even integer $2h$, there exist infinitely many pairs
of consecutive primes $(p,q)$ with $p-q=2h$.
\end{conj}
\begin{rem}
The case \(h=1\) recovers the well-known twin prime conjecture. The full conjecture remains unresolved to this day.
\end{rem}

As an immediate corollary, Theorem~\ref{thm:main} yields a conditional resolution of our prime spectrum problem.

\begin{coro}\label{cor:polignac}
If the Polignac conjecture is true, then the set of primes cannot be a spectrum of any Borel probability measure on \(\mathbb{R}\). In particular, there is no singular continuous Borel probability
measure $\mu$ for which $E(\Pp)$ is an orthonormal basis for
$L^2(\mu)$.
\end{coro}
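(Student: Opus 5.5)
The plan is to show that the Polignac conjecture implies the eventual even difference hypothesis \eqref{eq:ED}, and then to apply Theorem~\ref{thm:main}.

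First, assume Conjecture~\ref{conj:polignac} holds, and fix any integer $h\ge 1$. The conjecture gives infinitely many pairs of consecutive primes $(p,q)$ with $p-q=2h$. We only need one such pair, which gives $2h=p-q\in\Pp-\Pp$. Since $h\ge1$ was arbitrary, \eqref{eq:ED} holds with $H=1$. In fact the conjecture gives much more than \eqref{eq:ED} requires: consecutiveness of the primes and infinitude of the pairs are not used.

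Second, Theorem~\ref{thm:main} now applies. It says there is no Borel probability measure $\mu$ on $\R$ for which $E(\Pp)$ is an orthonormal basis of $L^2(\mu)$. This is exactly the statement that $\Pp$ is not a spectrum of any Borel probability measure on $\R$. The theorem is stated for arbitrary Borel probability measures, not only compactly supported ones, so no extra reduction is needed here.

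For the ``in particular'' clause, singular continuous measures are a subclass of the Borel probability measures already excluded, so the clause follows immediately. It is also worth noting, via Proposition~\ref{prop:singular}, that this is the only case that could have occurred: any measure with spectrum $\Pp$ would have to be singular continuous. So the corollary rules out the one type of measure not already excluded by the density considerations of Proposition~\ref{prop:density}.

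There is no real obstacle in this argument. The only point to check is that the quantifiers match: Polignac gives the differences $2h$ for every $h\ge1$, which includes every $h\ge H$ for any choice of $H$.
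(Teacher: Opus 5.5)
Your proposal is correct and matches the paper's proof. Polignac gives $2h\in\Pp-\Pp$ for every $h\ge 1$, so (ED) holds with $H=1$; Theorem~\ref{thm:main} then excludes every Borel probability measure, and hence in particular every singular continuous one.
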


\section{A necessary condition: singular continuity}
In this section, we prove Proposition \ref{prop:singular}.  We begin with a preliminary lemma concerning the lower Beurling density of the set of primes.
\begin{lem}\label{lem:Pdensity}
$D^{-}(\mathbb{P})=0$.
\end{lem}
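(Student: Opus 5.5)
To prove Lemma~\ref{lem:Pdensity}, it suffices to exhibit, for every large $h$, an interval of length $2h$ that contains no primes at all. Since the infimum in Definition~\ref{def:density} is taken over all centres $x\in\R$, this shows that
\[
\inf_{x\in\R}\frac{\#\bigl(\Pp\cap[x-h,x+h]\bigr)}{h}=0
\]
for every sufficiently large $h$, and hence $D^-(\Pp)=0$. So the plan is simply to produce arbitrarily long runs of consecutive composite integers, that is, arbitrarily large prime gaps.

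The classical construction does this. Given an integer $n\ge 2$, consider the $n-1$ consecutive integers
\[
n!+2,\ n!+3,\ \ldots,\ n!+n .
\]
For each $2\le k\le n$, the integer $k$ divides both $n!$ and $k$, so it divides $n!+k$. Since $n!+k>k$, each of these numbers is composite.

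Now fix $h\ge 1$ and choose $n=\lceil 2h\rceil+3$. Take $x=n!+2+h$, so that the interval is $[x-h,x+h]=[n!+2,\,n!+2+2h]$. Because $n!+2+2h<n!+n$, every integer in this interval is one of the composite numbers $n!+k$ with $2\le k\le n$. Hence $\Pp\cap[x-h,x+h]=\emptyset$. The ratio inside the infimum is therefore $0$ for every $h$, and taking $\liminf_{h\to\infty}$ gives $D^-(\Pp)=0$.

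There is no real obstacle here; the only care needed is the bookkeeping that the interval, endpoints included, lies inside the composite block. Alternatively, one could argue from the prime number theorem: $\pi(x)\sim x/\log x$ forces average gaps to tend to infinity. That route only bounds the upper density, however, so the explicit factorial block is the cleaner way to control the infimum over $x$.
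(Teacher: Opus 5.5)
Your proof is correct and is essentially the paper's argument: the factorial block $n!+2,\dots,n!+n$ of composites with $n$ slightly larger than $2h$, and a window $[x-h,x+h]$ placed inside it (your choice $n=\lceil 2h\rceil+3$, $x=n!+2+h$ handles the endpoints cleanly). One correction to your closing aside: the prime number theorem route does bound $D^-(\Pp)$---even the elementary fact $\pi(X)=o(X)$ suffices, since the single centre $x=h$ gives $\inf_{x}\#\bigl(\Pp\cap[x-h,x+h]\bigr)/h\le \pi(2h)/h\to 0$---and what a global count cannot control is the supremum over $x$ in the upper Beurling density, not the lower one.
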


\begin{proof}
A classical elementary fact is that the set of prime numbers contains arbitrarily long gaps, i.e., for any integer $n\ge 2$, the consecutive integers
\[
n!+2,\; n!+3,\; \dots,\; n!+n
\]
are all composite; see, for example, the proof of Theorem~5 in \cite[\S1.4, p.~5]{HW08}. Hence the open interval $(n!+1,\, n!+n+1)$ contains no primes and has length $n$.

Fix an arbitrary $h>0$. Choose an integer $n$ with $n > 2h+2$ and consider the interval
$I = [n!+2,\, n!+n]$, whose length is $n-1>2h$.
Let $x_0 = n! + \frac{n+2}{2}$ be the midpoint of $I$.
Then $[x_0-h,\, x_0+h] \subseteq I$, and consequently
\[
\#\bigl(\mathbb{P} \cap [x_0-h,\, x_0+h]\bigr) = 0.
\]
Thus, for every $h>0$,
\[
\inf_{x\in\mathbb{R}} \frac{\#\bigl(\mathbb{P}\cap[x-h,x+h]\bigr)}{h} = 0.
\]
Taking $\liminf$ as $h\to\infty$ yields $D^{-}(\mathbb{P}) = 0$.
\end{proof}

\begin{proof}[Proof of Proposition~\ref{prop:singular}]
By the Lebesgue-Radon-Nikodym theorem, the measure $\mu$ can be written uniquely as
\[
    \mu = \mu_{\mathrm{d}} + \mu_{\mathrm{ac}} + \mu_{\mathrm{sc}},
\]
where $\mu_{\mathrm{d}}$ is discrete, $\mu_{\mathrm{ac}}$ is absolutely continuous with respect to the Lebesgue measure $\mathcal{L}$, and $\mu_{\mathrm{sc}}$ is singular
continuous, i.e., $\mu_{\mathrm{sc}}$ is continuous but $\mu_{\mathrm{sc}}\perp \mathcal{L}$. We will show that both \(\mu_{\mathrm{d}} = 0\) and \(\mu_{\mathrm{ac}} = 0\), which forces \(\mu = \mu_{\mathrm{sc}}\).

We first prove $\mu$ has no atoms. Suppose to the contrary that $\mu_{\mathrm{d}}\neq0$.  Then there
exists a point $a\in\R$ with $c:=\mu(\{a\})>0$. Consider the
indicator function $f:=\mathbf{1}_{\{a\}}$,
i.e., $f(x)=1$ if $x=a$ and $f(x)=0$ otherwise.  Since $\mu(\{a\})=c$, we have
\(\|f\|_{L^2(\mu)}^2 = c\). For every $p\in\Pp$,
\[
    \langle f,e_p\rangle_{L^2(\mu)}
    = \int_{\R} \mathbf{1}_{\{a\}}(x)\,e^{-2\pi i p x}\,\mathrm{d}\mu(x)
    = c\,e^{-2\pi i p a},
\]
and so $|\langle f,e_p\rangle|^2 = c^2$.

Because $E(\Pp)$ is an orthonormal basis of $L^2(\mu)$, Parseval's
identity gives
\begin{equation}\label{eqeqrrr}
    c = \|f\|_{L^2(\mu)}^2
    = \sum_{p\in\Pp} \bigl|\langle f,e_p\rangle\bigr|^2
    = \sum_{p\in\Pp} c^2
    = c^2\cdot\#\Pp.
\end{equation}
Since $\Pp$ is infinite, the right-hand side of \eqref{eqeqrrr} is infinite whenever
$c>0$, contradicting the finiteness of the left-hand side.  Therefore
$\mu_{\mathrm{d}}=0$.

Next, we prove that $\mu$ has no absolutely continuous part. Suppose to the contrary that $\mu_{\mathrm{ac}}\neq0$.  Write
$\mathrm{d}\mu_{\mathrm{ac}} = w\,\mathrm{d}x$ with
$w\in L^1(\R)$, $w\ge0~\mathcal{L}-a.e.$, and $w\not\equiv0$.  Since
$\mu_{\mathrm{sc}}\perp\mathcal{L}$, there exists a Borel set
$S\subseteq\R$ such that $\mathcal{L}(S)=0$ and
$\mu_{\mathrm{sc}}(\R\setminus S)=0$.

Since $w\not\equiv0$, there exists $\varepsilon>0$ such that
$\mathcal{L}(\{x:w(x)>\varepsilon\})>0$.  By Chebyshev's inequality,
\[
    \mathcal{L}(\{x:w(x)>\beta\}) \le \frac{1}{\beta}\int_{\R}w(x)\,\mathrm{d}x
    \to 0 \quad\text{as }\beta\to\infty.
\]
Hence we can pick $\beta>\varepsilon$ large enough so that the set
$B:=\{x\in\R\setminus S:\varepsilon<w(x)\le\beta\}$ satisfies
$\mathcal{L}(B)>0$.  Since $B\cap[-R,R]\uparrow B$ as $R\to\infty$,
continuity of measure gives $\mathcal{L}(B\cap[-R,R])\to\mathcal{L}(B)$.
Choosing $R>0$ sufficiently large, we obtain
\[
    A:=B\cap[-R,R]=\bigl\{x\in[-R,R]\setminus S : \varepsilon\le w(x)\le\beta\bigr\}
\]
with $\mathcal{L}(A)>0$.  Setting $\alpha:=\varepsilon$,
we have $0<\alpha\le w(x)\le\beta<\infty$ for all $x\in A$.
By construction $\mu_{\mathrm{sc}}(A)=0$ and $\mu_{\mathrm{d}}(A)=0$, so
$\mu|_A = w\,\mathrm{d}x|_A$ with $\alpha\le w\le\beta$ on $A$.

For any $h\in L^2(A)$, define
\[
    f(x) =
    \begin{cases}
        h(x)/w(x), & x\in A,\\[2pt]
        0, & x\notin A.
    \end{cases}
\]
Then
\[
    \|f\|_{L^2(\mu)}^2
    = \int_A \frac{|h(x)|^2}{w(x)}\,\mathrm{d}x,
\]
and consequently
\begin{equation}\label{eqfrede}
    \frac{1}{\beta}\,\|h\|_{L^2(A)}^2
    \le \|f\|_{L^2(\mu)}^2
    \le \frac{1}{\alpha}\,\|h\|_{L^2(A)}^2.
\end{equation}

Moreover,
\[
    \langle f,e_p\rangle_{L^2(\mu)}
    = \int_A \frac{h(x)}{w(x)}\,e^{-2\pi i p x}\,w(x)\,\mathrm{d}x
    = \int_A h(x)\,e^{-2\pi i p x}\,\mathrm{d}x
    = \widehat{h\mathbf{I}_A}(p).
\]
Since $E(\Pp)$ is an orthonormal basis of $L^2(\mu)$, Parseval's
identity yields, for every $h\in L^2(A)$,
\[
    \sum_{p\in\Pp} \bigl|\widehat{h\mathbf{I}_A}(p)\bigr|^2
    = \|f\|_{L^2(\mu)}^2.
\]
Then, by \eqref{eqfrede}, $E(\Pp) = \{e^{2\pi i p x}\}_{p\in\Pp}$ is a Fourier frame for
$L^2(A)$.

By Landau's density theorem for Fourier frames~\cite{La},
this forces
\[
    D^{-}(\Pp) \ge \mathcal{L}(A) > 0.
\]
But Lemma~\ref{lem:Pdensity} gives $D^{-}(\Pp)=0$, a contradiction.
Hence $\mu_{\mathrm{ac}}=0$.

Hence, $\mu=\mu_{\mathrm{sc}}$ is singular continuous, which completes the proof.
\end{proof}

\section{Fourier analysis and the main obstruction}
In this section we complete the proof of Theorem~\ref{thm:main}.
We first derive the Fourier identities imposed by the assumption
that $E(\Pp)$ is an orthonormal basis, and then show that these identities are incompatible
with the arithmetic hypothesis \eqref{eq:ED}.

\subsection{Fourier consequences of orthogonality and completeness}

Let $\mu$ be a Borel probability measure on $\R$. The Fourier transform of $\mu$ is
\[
    \widehat{\mu}(\xi)=
    \int_{\R}e^{-2\pi i\xi x}\,\mathrm{d}\mu(x).
\]
Since $\mu$ is a positive measure, it follows that $\widehat{\mu}(-\xi)=\overline{\widehat{\mu}(\xi)}$.

For $k\in\Z$, define
\begin{equation}\label{eq:ak}
    a(k):=|\widehat{\mu}(k)|^2.
\end{equation}
Then $a(k)\ge0$ for all $k\in\Z$, with $a(0)=1$ and $a(-k)=a(k)$.

\begin{prop}\label{prop:tiling}
Suppose that $E(\Pp)$ is an orthonormal basis of $L^2(\mu)$, and let
$a(k)$ be defined as in \eqref{eq:ak}. Then
\begin{itemize}
\item[(i)] $a(p-q) = 0$ for all $p,q\in\Pp$ with $p\neq q$;
\item[(ii)] $\displaystyle\sum_{p\in\Pp} a(n-p) = 1$ for every $n\in\mathbb{Z}$.
\end{itemize}
\end{prop}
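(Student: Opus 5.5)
The plan is to derive both identities directly from the inner products of exponentials in $L^2(\mu)$. For $\lambda,\lambda'\in\R$,
\[
\langle e_\lambda,e_{\lambda'}\rangle_{L^2(\mu)}=\int_{\R}e^{2\pi i(\lambda-\lambda')x}\,\mathrm{d}\mu(x)=\widehat{\mu}(\lambda'-\lambda).
\]

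For (i), orthonormality of $E(\Pp)$ gives $\widehat{\mu}(q-p)=0$ for distinct primes $p,q$. Hence $a(p-q)=|\widehat{\mu}(p-q)|^2=|\widehat{\mu}(q-p)|^2=0$, using $\widehat{\mu}(-\xi)=\overline{\widehat{\mu}(\xi)}$; in fact both orders already appear among the pairs. This step is immediate.

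For (ii), fix $n\in\Z$ and apply Parseval's identity to $f=e_n\in L^2(\mu)$. Since $\mu$ is a probability measure, $\|e_n\|_{L^2(\mu)}^2=1$. Completeness of $E(\Pp)$ gives
\[
1=\sum_{p\in\Pp}|\langle e_n,e_p\rangle|^2=\sum_{p\in\Pp}|\widehat{\mu}(p-n)|^2=\sum_{p\in\Pp}a(p-n)=\sum_{p\in\Pp}a(n-p),
\]
where the last step uses the evenness $a(-k)=a(k)$. The same computation works for any real $n$; we only need integers, so that $n-p\in\Z$ and $a$ is defined as in \eqref{eq:ak}.

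The only points needing care are the following. We check that $e_n$ is indeed in $L^2(\mu)$, which holds because it is bounded and $\mu$ is finite. We also keep track of the sign conventions in the Fourier transform so that the arguments match $a(n-p)$. No compact support assumption on $\mu$ is needed here. There is no genuine obstacle; the step needing most care is this bookkeeping of signs and conjugates.
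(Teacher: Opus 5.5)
Your proposal is correct and follows the paper's proof exactly. Orthogonality of $e_p,e_q$ gives $\widehat{\mu}(q-p)=0$, which yields (i), and Parseval's identity applied to $e_n$, together with the evenness of $a$, yields (ii). You also correctly let $n$ range over all of $\Z$, whereas the paper's write-up says ``for all $n\geq 1$''. The same argument works verbatim for every integer, and the later proof of Theorem~\ref{thm:obstruction} does use $n=0$ and negative odd $n$.
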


\begin{proof}
For distinct primes $p,q$, the orthogonality of $E(\Pp)$ gives
\[
    0
    =
    \langle e_p,e_q\rangle_{L^2(\mu)}
    =
    \int_{\R}e^{2\pi i(p-q)x}\,\mathrm{d}\mu(x)
    =
    \widehat{\mu}(q-p).
\]
It follows that $a(p-q)=|\widehat{\mu}(p-q)|^2=0$, which proves (i).

Since $\mu$ is a probability measure, $\|e_n\|=1$ for all $n\geq1$. Because $E(\Pp)$ is an orthonormal basis, Parseval's identity gives
\[
    1
    =\|e_n\|^2=
    \sum_{p\in\Pp}
    \left|
        \langle e_n,e_p\rangle_{L^2(\mu)}
    \right|^2,
\]
for all $n\geq1$.

Moreover,
\[
    \langle e_n,e_p\rangle_{L^2(\mu)}
    =
    \int_{\R}e^{2\pi i(n-p)x}\,\mathrm{d}\mu(x)
    =
    \widehat{\mu}(p-n).
\]
Therefore,
\[
    1
    =
    \sum_{p\in\Pp}
    |\widehat{\mu}(p-n)|^2
    =
    \sum_{p\in\Pp}a(n-p),
\]
where the last equality uses $a(-k)=a(k)$. This proves (ii).
\end{proof}

The second assertion in Proposition \ref{prop:tiling}  can be written formally as
$\mathbf{1}_{\Pp}*\{a(k)\}\equiv 1$ on $\Z$.
Thus the modulus squares of the Fourier coefficients form a
nonnegative weighted translational tiling complement to the prime
numbers. Orthogonality simultaneously imposes the support restriction
\[
    \supp(\{a(k)\})\cap
    \bigl((\Pp-\Pp)\setminus\{0\}\bigr)
    =
    \varnothing,
\]
where
\[\supp(\{a(k)\}):=\{k\in\Z:a(k)\neq0\}\]
The contradiction in Theorem~\ref{thm:main} results from the
incompatibility of these two requirements.

\subsection{An arithmetic obstruction}
The following theorem isolates the purely combinatorial core of the
argument: it treats an arbitrary nonnegative even function
$a:\Z\to[0,\infty)$ subject only to the conditions forced by
orthogonality and completeness.  It also makes explicit that only eventual
coverage of the even integers is needed.

\begin{thm}\label{thm:obstruction}
Let $a:\Z\to[0,\infty)$ satisfy
\begin{align}
    a(0)&=1,
    \label{eq:cond1}\\
    a(-k)&=a(k),
    &&k\in\Z,
    \label{eq:cond2}\\
    a(p-q)&=0,
    &&p,q\in\Pp,\quad p\neq q,
    \label{eq:cond3}\\
    \sum_{p\in\Pp}a(n-p)&=1,
    &&n\in\Z.
    \label{eq:cond4}
\end{align}
Then the set
\[
    \{2h:h\in\N\}\setminus(\Pp-\Pp)
\]
is infinite. In particular, no function $a:\mathbb{Z}\to [0,\infty)$
can satisfy~\eqref{eq:cond1}--\eqref{eq:cond4} if every sufficiently
large positive even integer belongs to $\Pp-\Pp$.
\end{thm}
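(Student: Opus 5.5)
The plan is to argue by contradiction, combining the tiling identity \eqref{eq:cond4} with a parity split and a counting (averaging) argument. Suppose the set $\{2h:h\in\N\}\setminus(\Pp-\Pp)$ is finite, so there is $H\in\N$ with $2h\in\Pp-\Pp$ for all $h\ge H$. Since $\Pp-\Pp$ is symmetric, \eqref{eq:cond3} and \eqref{eq:cond2} give
\[
a(k)=0\qquad\text{for every even } k \text{ with } |k|\ge 2H .
\]
So on the even integers $a$ is supported in the finite window $(-2H,2H)$. The rest of the argument only uses this, together with \eqref{eq:cond4} and the nonnegativity of $a$; condition \eqref{eq:cond1} is not needed.

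\textbf{Step 1 (parity and prime-free neighbourhoods).} Fix an odd integer $n$ such that the interval $[n-2H,n+2H]$ contains no prime. For every odd prime $p$, the difference $n-p$ is even and satisfies $|n-p|>2H$. Hence $a(n-p)=0$ for all odd $p$, and \eqref{eq:cond4} at $n$ collapses to the single term coming from $p=2$:
\[
a(n-2)=1 .
\]
Call an odd $n$ with this property \emph{isolated}. By Step 1, $a(k)=1$ at $k=n-2$ for every isolated $n$.

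\textbf{Step 2 (many isolated $n$).} We show that isolated odd integers have positive lower density. Each prime $p$ can prevent at most $4H+1$ integers $n$ from being isolated, namely those with $|n-p|\le 2H$. Therefore the number of isolated odd $n$ in $[1,N]$ is at least
\[
\tfrac{N}{2}-(4H+1)\,\pi(N+2H)-O(1).
\]
By Chebyshev's bound $\pi(x)=O(x/\log x)$ this is at least $N/3$ for all large $N$. Consequently, for large $N$,
\[
\sum_{|k|\le N} a(k)\;\ge\; \#\{k\in[-1,N]:a(k)=1\}\;\ge\; N/3 .
\]

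\textbf{Step 3 (upper bound by averaging the tiling identity).} Sum \eqref{eq:cond4} over $|n|\le 2N$ to get
\[
\sum_{|n|\le 2N}\ \sum_{p\in\Pp}a(n-p)=4N+1 .
\]
Since $a\ge 0$, we may keep only the pairs with $|k|\le N$ and $p\le N$, where $k=n-p$; each such pair has $n=k+p$ in $[-2N,2N]$. This gives
\[
\pi(N)\sum_{|k|\le N}a(k)\;\le\;4N+1 ,
\]
and hence, by the prime number theorem (or Chebyshev's lower bound),
\[
\sum_{|k|\le N}a(k)\;=\;O(\log N).
\]
This contradicts the linear lower bound of Step 2, so the set $\{2h:h\in\N\}\setminus(\Pp-\Pp)$ must be infinite. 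The "in particular" clause then follows immediately.

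The main point to get right is Step 1. The danger is that the term $p=2$ is the only odd-difference contribution, so the parity bookkeeping must be exact. One also needs Step 2 to produce genuinely many isolated $n$, not merely infinitely many: infinitely many would already follow from Lemma~\ref{lem:Pdensity}'s factorial gaps, but only positive density beats the logarithmic bound of Step 3. Both steps rely only on the elementary Chebyshev-type estimates $x/\log x\ll\pi(x)\ll x/\log x$.
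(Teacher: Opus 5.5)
Your proof is correct, but it takes a noticeably longer route than the paper's.

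The paper takes $n$ to be a sufficiently negative odd integer, $n\le 3-2H$. Since every prime is positive, each odd prime $p$ then satisfies $p-n\ge 2H$. So every such $n$ is automatically ``isolated'' in your sense, and \eqref{eq:cond4} gives $a(n-2)=1$ for all of them. Hence $a(k)=1$ for \emph{every} sufficiently negative odd $k$. A single evaluation of \eqref{eq:cond4} at $n=0$, namely $\sum_{p}a(-p)=1$, then already contains infinitely many terms equal to $1$. That argument uses only the infinitude of primes, with no density count and no averaging.

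You work on the positive side instead. There, isolated $n$ occur only sporadically, and $n-2$ is never prime because it lies in the prime-free window. So you cannot finish with one evaluation at $n=0$. That is why you need Step 2 (prime-free windows have positive density, via $\pi(x)=o(x)$) and Step 3 (averaging \eqref{eq:cond4} to get $\sum_{|k|\le N}a(k)\le (4N+1)/\pi(N)$).

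What your approach buys is that it never uses the fact that $\Pp$ is bounded below. It relies only on parity, the vanishing of $a$ on large even integers, and the sparseness of $\Pp$. That makes it the more robust argument for frequency sets with no element-free half-line, at the cost of some prime-counting input.

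Step 3 is also stronger than you need. The inequality $\pi(N)\cdot N/3\le 4N+1$ already fails for large $N$ once $\pi(N)\ge 13$. So the Chebyshev lower bound is unnecessary there; the infinitude of primes suffices.
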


\begin{proof}
We prove it by contradiction. Assume the set
\[
E:=\{2h:h\in\N\}\setminus(\Pp-\Pp)
\]
is finite. Then there exists $H\in\N$ such that for all $h\ge H$,
\begin{equation}\label{eq:2hinPP}
    2h\in\Pp-\Pp.
\end{equation}

Since $\Pp-\Pp$ is symmetric, \eqref{eq:cond3} together
with~\eqref{eq:2hinPP} yields
\begin{equation}\label{eq:a2hzero}
    a(2h)=a(-2h)=0,
\end{equation}
for all $h\ge H$.

Now let $n$ be a negative odd integer with
\begin{equation}\label{eq:nbound}
    n\le 3-2H.
\end{equation}
Applying the tiling identity~\eqref{eq:cond4} at $n$ and separating
the prime $p=2$, we obtain
\begin{equation}\label{eq:tilingAtn}
    1
    =\sum_{p\in\Pp}a(n-p)
    =a(n-2)+\sum_{\substack{p\in\Pp\\ p\ge 3}}a(n-p).
\end{equation}

Every prime \(p\ge 3\) is odd, and \(n\) is also odd, so the difference \(p-n\) is a positive even integer. Moreover, using~\eqref{eq:nbound},
\[
    p-n\ge 3-n\ge 2H.
\]
Thus $p-n=2h$ for some $h\ge H$, and~\eqref{eq:a2hzero} gives
$a(n-p)=a(p-n)=0$.  Consequently, every term in the sum over
$p\ge 3$ vanishes, and~\eqref{eq:tilingAtn} reduces to
\begin{equation*}%\label{eq:anm2one}
    a(n-2)=1.
\end{equation*}

As $n$ ranges over all negative odd integers with $n\le 3-2H$, the
values $n-2$ exhaust all sufficiently negative odd integers. Hence $a(k)=1$ for every sufficiently negative odd integer $k$.

By \eqref{eq:cond2}, it follows that
\begin{equation}\label{eq:akone}
    a(k)=1\quad\text{for every sufficiently large positive odd integer }k.
\end{equation}

We now evaluate~\eqref{eq:cond4} at $n=0$:
\begin{equation}\label{eq:tilingAt0}
    1
    =\sum_{p\in\Pp}a(-p)
    =\sum_{p\in\Pp}a(p).
\end{equation}

It is well-known that there are infinitely many primes, and by~\eqref{eq:akone} we have
$a(p)=1$ for every sufficiently large odd prime $p$.  Therefore the
nonnegative series on the right-hand side of~\eqref{eq:tilingAt0}
contains infinitely many terms equal to~$1$ and diverges to $+\infty$,
contradicting~\eqref{eq:tilingAt0}.

The contradiction shows that $E$ cannot be finite, i.e., there are
infinitely many positive even integers that do not belong to $\Pp-\Pp$.
\end{proof}

Theorem~\ref{thm:obstruction} immediately gives the following corollary.

\begin{coro}\label{cor:necessary}
If there exists a Borel probability measure $\mu$ on $\R$ for which
$E(\Pp)$ is an orthonormal basis of $L^2(\mu)$, then there are
infinitely many positive even integers that cannot be represented as
a difference of two primes.
\end{coro}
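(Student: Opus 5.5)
The corollary follows by combining Proposition~\ref{prop:tiling} with Theorem~\ref{thm:obstruction}. Suppose $\mu$ is a Borel probability measure on $\R$ for which $E(\Pp)$ is an orthonormal basis of $L^2(\mu)$. Define $a(k):=|\widehat{\mu}(k)|^2$ for $k\in\Z$ as in \eqref{eq:ak}. We check the four hypotheses \eqref{eq:cond1}--\eqref{eq:cond4} of Theorem~\ref{thm:obstruction} in turn.

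\textbf{Conditions \eqref{eq:cond1} and \eqref{eq:cond2}.} Nonnegativity is immediate from the definition. Since $\mu$ is a probability measure, $a(0)=|\widehat{\mu}(0)|^2=\mu(\R)^2=1$. Since $\mu$ is positive, $\widehat{\mu}(-\xi)=\overline{\widehat{\mu}(\xi)}$, which gives $a(-k)=a(k)$.

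\textbf{Conditions \eqref{eq:cond3} and \eqref{eq:cond4}.} These are exactly parts (i) and (ii) of Proposition~\ref{prop:tiling}.

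There is one point to check. The proof of Proposition~\ref{prop:tiling}(ii) applies Parseval's identity to $e_n$ and writes ``for all $n\ge1$''. Theorem~\ref{thm:obstruction} needs \eqref{eq:cond4} for every $n\in\Z$, and in particular for negative odd $n$. This is not a real gap. The exponential $e_n(x)=e^{2\pi i n x}$ has $\|e_n\|_{L^2(\mu)}=1$ for every $n\in\Z$, indeed for every real $n$, because $|e_n|\equiv1$ and $\mu$ is a probability measure. The same Parseval computation therefore goes through verbatim and gives $\sum_{p\in\Pp}a(n-p)=1$ for all $n\in\Z$.

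With all four conditions verified, Theorem~\ref{thm:obstruction} shows that $\{2h:h\in\N\}\setminus(\Pp-\Pp)$ is infinite. That is, infinitely many positive even integers are not differences of two primes, which is the assertion of the corollary.

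I expect no real obstacle here. The only step needing care is confirming that Proposition~\ref{prop:tiling}(ii) holds for all integers $n$, including negative ones, and I would state that extension explicitly in the proof. Taking the contrapositive under hypothesis (ED) then yields Theorem~\ref{thm:main}.
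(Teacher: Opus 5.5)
Your proposal is correct and is exactly the paper's proof: apply Proposition~\ref{prop:tiling} and Theorem~\ref{thm:obstruction} to $a(k)=|\widehat{\mu}(k)|^2$. Your observation that Parseval's identity for $e_n$ holds for every $n\in\Z$ (the proof of Proposition~\ref{prop:tiling} writes only $n\ge 1$) is a correct and worthwhile patch, since Theorem~\ref{thm:obstruction} uses \eqref{eq:cond4} at negative odd $n$ and at $n=0$.
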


\begin{proof}
Apply Proposition~\ref{prop:tiling} and
Theorem~\ref{thm:obstruction} to
$a(k)=|\widehat{\mu}(k)|^2$.
\end{proof}

\begin{rem}\label{rem:strength}
Corollary~\ref{cor:necessary} is stronger than the assertion that the
Polignac conjecture must fail. It states that if the primes
were a spectrum, then infinitely many positive even integers would
fail to occur even once as a difference of two primes. By contrast,
the Polignac conjecture predicts that every positive even
integer occurs infinitely often as the difference of two consecutive
primes.
\end{rem}

\subsection{Proofs of the main results}

We now prove Theorem~\ref{thm:main} and Corollary~\ref{cor:polignac}.

\begin{proof}[Proof of Theorem~\ref{thm:main}]
Suppose that a Borel probability measure $\mu$ exists such that
$E(\Pp)$ is an orthonormal basis of $L^2(\mu)$. Define
$a(k)=|\widehat{\mu}(k)|^2$ for $k\in\Z$.
By Proposition~\ref{prop:tiling}, the function $a$ satisfies
\eqref{eq:cond1}--\eqref{eq:cond4}.

Under the eventual even difference hypothesis~\eqref{eq:ED}, every
sufficiently large positive even integer belongs to $\Pp-\Pp$. This
contradicts Theorem~\ref{thm:obstruction}. Hence no such
measure $\mu$ exists.
\end{proof}

\begin{proof}[Proof of Corollary~\ref{cor:polignac}]
Under the Polignac conjecture, every positive even integer
is the difference of two primes; indeed, it is the difference of
consecutive primes in infinitely many ways. Thus~\eqref{eq:ED} holds
with $H=1$. The conclusion follows immediately from
Theorem~\ref{thm:main}.
\end{proof}

Thus, under the eventual even difference hypothesis, the
incompatibility between the Fourier zero set forced by prime
differences and the exact Parseval tiling identity forced by
completeness precludes the prime numbers from being a spectrum
of any Borel probability measure. In particular, if the
Polignac conjecture holds, no such measure exists.

\subsection{A direct formulation under the Polignac conjecture}

For completeness, we give the transparent form of the
argument when every positive even integer is a prime difference.
This yields a self-contained statement whose proof is immediate from
the arguments developed above.

\begin{prop}\label{prop:direct}
Assume that $2\N\subseteq\Pp-\Pp$.
Then there is no nonnegative even function $a:\Z\to[0,\infty)$
satisfying
\(
    a(0)=1,~
    a(p-q)=0\;\;(p,q\in\Pp,\;p\neq q)\) and
\begin{equation}\label{eqidded}
\sum_{p\in\Pp}a(n-p)=1\;\;(n\in\Z).
\end{equation}
\end{prop}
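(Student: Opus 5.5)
Proposition~\ref{prop:direct} is the special case $H=1$ of Theorem~\ref{thm:obstruction}. We could simply observe that the hypothesis $2\N\subseteq\Pp-\Pp$ makes the set $\{2h:h\in\N\}\setminus(\Pp-\Pp)$ empty, hence finite, which contradicts the conclusion of that theorem. Since the point of this subsection is a self-contained version, I would instead run the argument directly with all constants explicit.

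First, I would show that $a$ vanishes on all nonzero even integers. For $h\ge1$, the hypothesis gives $2h=p-q$ with $p\neq q$ primes, so $a(2h)=0$. Evenness of $a$ then gives $a(-2h)=0$.

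Second, I would take an arbitrary negative odd integer $n\le -1$ and apply \eqref{eqidded} at $n$. Every odd prime $p\ge3$ makes $p-n$ a positive even integer, so $a(n-p)=a(p-n)=0$ by the first step. Only the term $p=2$ survives, and it gives $a(n-2)=1$. As $n$ runs over the negative odd integers, $n-2$ runs over all odd integers $\le -3$. Hence $a(k)=1$ for every odd $k\le-3$, and by evenness $a(k)=1$ for every odd $k\ge3$.

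Third, I would evaluate \eqref{eqidded} at $n=0$, which gives $1=\sum_{p\in\Pp}a(-p)=\sum_{p\in\Pp}a(p)$. Every odd prime contributes $a(p)=1$, and there are infinitely many odd primes, so the series diverges. This contradicts its value being $1$.

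There is no real obstacle here. The only care needed is in the second step: the prime $2$ must be isolated, because the parity argument fails for it, and one must check that $p-n\ge 3-n\ge 4>0$, so that the first step applies and we never invoke $a$ at $0$.
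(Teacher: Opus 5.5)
Your proposal is correct and follows the paper's own proof of this proposition. Like the paper, you show that $a$ vanishes on all nonzero even integers, isolate the prime $2$ in the tiling identity at negative odd $n$ to force $a(n-2)=1$, transfer this to odd $k\ge 3$ by evenness, and get a divergent $\sum_{p}a(p)$ at $n=0$; your explicit bound $p-n\ge 4$ and your remark that this is the $H=1$ case of Theorem~\ref{thm:obstruction} are both accurate.
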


\begin{proof}
We prove it by contradiction. The hypothesis $2\N\subseteq\Pp-\Pp$ and the zero-set condition imply
$a(2m)=0$ for all $m\in\Z\setminus\{0\}$.
Let $n<0$ be odd. In
\[
    1
    =
    a(n-2)
    +
    \sum_{\substack{p\in\Pp\\p\geq3}}a(n-p),
\]
the difference $n-p$ is a nonzero even integer for every odd prime
$p\geq3$. All terms in the sum therefore vanish, yielding
$a(n-2)=1$.
By symmetry, we conclude that $a(k)=1$ for every positive odd integer $k\geq3$.
Consequently,
\[
    \sum_{p\in\Pp}a(-p)
    =
    \sum_{p\in\Pp}a(p)
    =
    \infty,
\]
contradicting \eqref{eqidded} at $n=0$.
\end{proof}

\section{Concluding remarks and open problems}
Theorem~\ref{thm:main} shows that, under the hypothesis (ED), the
prime numbers $\Pp$ cannot form a spectrum of  any probability measure.  A natural weakening is to ask whether $\Pp$ can serve as a frame spectrum.  Recall that a
countable set $\Lambda$ is called a {\it frame spectrum} of $\mu$ if
$E(\Lambda)$ is a frame for $L^2(\mu)$, i.e., there exist positive constants $A$ and $B$ such that for any $f\in L^2(\mu)$,
\[
    A\|f\|^2\le \sum_{\lambda\in\Lambda}|\langle f,e_\lambda\rangle|^2
    \le B\|f\|^2.
\]
Every spectrum is a frame spectrum with $A=B=1$, but the
converse is not true in general.  The proof of Theorem~\ref{thm:obstruction}
relies crucially on the equality in the tiling
condition~\eqref{eq:cond4}, which is a consequence of Parseval's
identity for an orthonormal basis.  If $\Pp$ were merely a frame
spectrum, condition~\eqref{eq:cond4} would be replaced by the
following inequality
\[
    A \le \sum_{p\in\Pp} a(n-p) \le B \qquad(n\in\Z),
\]
and the counting argument that forces $a(k)=1$ for all large
odd $k$ no longer applies.  This motivates us the following question:

\begin{ques}\label{ques:frame}
Under the hypothesis (ED), does there exist a singular continuous Borel
probability measure $\mu$ for which $E(\Pp)$ is a frame (but not an
orthonormal basis) of $L^2(\mu)$?
\end{ques}

Moreover, we have the following quesiton concerning the necessity of the hypothesis (ED):
\begin{ques}\label{ques:unconditional}
Is it possible to prove that \(\mathbb{P}\) cannot be a spectrum of any Borel probability measure, without invoking the hypothesis (ED)?
\end{ques}

\section*{Acknowledgements}
The authors were supported by the National Natural Science Foundations of China (12271534, 12301105) and the University Research Project of Guangzhou Education Bureau (2024312332). The first author would like to thank Wei-Lin Zhang for discussion on the Polignac conjecture.

% ======================================================================

\end{document}